
\documentclass{amsart}[11pt]
\usepackage{amssymb}
\usepackage{latexsym}
\usepackage[all]{xy}

\newtheorem{theorem}{Theorem}[section]
\newtheorem{lemma}[theorem]{Lemma}
\newtheorem{proposition}[theorem]{Proposition}
\newtheorem{corollary}[theorem]{Corollary}
\theoremstyle{definition}
\newtheorem{definition}[theorem]{Definition}
\newtheorem{example}[theorem]{Example}

\newtheorem{remark}[theorem]{Remark}
\numberwithin{equation}{theorem}
\def\Chi{{\mathbb X}}
\def\Ker{{\rm Ker}}

\def\div{{\rm div}}

\def\rk{{\rm rk}}

\def\KK{{\mathbb K}}

\def\ZZ{{\mathbb Z}}

\def\PP{{\mathbb P}}
\def\ZZ{{\mathbb Z}}
\def\Of{{\mathcal{O}}}

\def\Rf{{\mathcal{R}}}

\def\Ff{{\mathcal{F}}}

\def\WDiv{\operatorname{WDiv}}

\def\Quot{\operatorname{Quot}}

\def\SL{\operatorname{SL}}

\def\Cl{\operatorname{Cl}}
\def\Spec{{\rm Spec}}
\def\Pic{{\rm Pic}}
\newcounter{itemnumber}


\begin{document}

\sloppy

\title[On factoriality of Cox rings]
{On factoriality of Cox rings}
\author[I.~Arzhantsev]{Ivan~V. Arzhantsev} 
\thanks{The author gratefully acknowledges the support from INTAS YS 05-109-4958 and Deligne 2004 Balzan prize in mathematics}
\address{Department of Higher Algebra, 
Faculty of Mechanics and Mathematics, 
Moscow State University,
Leninskie Gory, GSP-2, Moscow, 119992, Russia}
\email{arjantse@mccme.ru}
\subjclass[2000]{13F15, 14C20, 14L30}
\begin{abstract}
Generalized Cox's construction associates with an algebraic variety a remarkable invariant -- 
its total coordinate ring, or Cox ring. In this note we give a new proof of factoriality of the Cox ring 
when the divisor class group of the variety is finitely generated and free. The proof is based 
on a notion of graded factoriality. We show that if the divisor class group has torsion,
then the Cox ring is again factorially graded, but factoriality may be lost. 
\end{abstract}

\maketitle


\section{Introduction}

Let $X$ be an irreducible normal algebraic variety over an algebraically closed field $\KK$
with a free finitely generated divisor class group $\Cl(X)$.
Denote by $\WDiv(X)$
the group of Weil divisors on $X$ and fix a sublattice $K\subset \WDiv(X)$ 
which maps onto $\Cl(X)$ isomorphically. Following famous D.~Cox's construction~\cite{Cox} 
from toric geometry, define the Cox ring of the variety $X$ as
$$
\ \ R(X)=\bigoplus_{D\in K} \Of(X,D), \ \ \ \text{where} \ \ \ \Of(X,D)=\{\, f\in\KK(X) \,\mid\, \div(f)+D\ge 0\,\}.
$$ 
Multiplication on graded components of $R(X)$ coincides with multiplication in the field $\KK(X)$
of rational functions, and extends to other elements by distributivity. It can be easily  
checked that the ring $R(X)$ depends on the choice of the lattice $K$ only up to isomorphism
(for a more general statement, see Proposition~\ref{prop1}).   
An important property of $R(X)$ is that it is a factorial ring,
see~\cite{BH1}, \cite{EKW}. Here we give a new proof of this result.
 
\begin{theorem}\label{t1}
The ring $R(X)$ is factorial.
\end{theorem}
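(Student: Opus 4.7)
The plan is to introduce a notion of graded factoriality and split the proof into two steps: first, establish that $R(X)$ is \emph{factorially $K$-graded}, meaning that every nonzero homogeneous non-unit of $R(X)$ admits an essentially unique factorization into homogeneous prime elements; second, upgrade this to honest factoriality of $R(X)$, using that $K$ is torsion-free.

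For the first step I would use the geometric description $R(X)_D=\Of(X,D)$ directly. For each prime divisor $E\subset X$ choose its class $[E]\in K$ and consider the canonical section $f_E\in\Of(X,[E])$ given by $f_E=1\in\KK(X)$; its associated effective divisor is $\div(f_E)+[E]=E$. The observation is that $f_E$ is a homogeneous prime element of $R(X)$: divisibility of a homogeneous $h\in R(X)_D$ by $f_E$ translates into the condition that $E$ appears with positive multiplicity in $\div(h)+D$, and this property passes from a product of homogeneous elements to one of its factors, since both summands contributing to $\div(gh)+\deg g+\deg h$ are effective. Given a nonzero homogeneous $f\in R(X)_D$, the decomposition $\div(f)+D=\sum n_iE_i$ into prime divisors---unique because $X$ is normal---translates into a factorization $f=u\prod f_{E_i}^{n_i}$ with $u$ a homogeneous unit in $R(X)_0^*$, unique up to such units.

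For the second step I would exploit that any finitely generated torsion-free abelian group, in particular $K$, admits a total order compatible with addition. A standard leading-term argument then shows that a homogeneous element of $R(X)$ which is prime among homogeneous elements is already prime in the ungraded sense: the leading-term map is multiplicative on the domain $R(X)$, so divisibility by a homogeneous prime $p$ can be tracked on leading terms, and one peels off one homogeneous summand at a time by induction on the number of nonzero components. Combined with the first step, this yields the desired unique factorization of arbitrary nonzero elements of $R(X)$ into primes.

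The main obstacle I anticipate is step two, and more specifically clarifying the role of torsion-freeness of $K$. The abstract announces that graded factoriality of $R(X)$ persists when $\Cl(X)$ has torsion but that ordinary factoriality may fail; this is precisely where the total ordering on $K$ breaks down, and what makes step two the substantive content of the proof.
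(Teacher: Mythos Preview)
Your two-step plan matches the paper's strategy, and both Step~1 (graded factoriality via the bijection between effective divisors and association classes in $R(X)^+$) and the leading-term argument in Step~2 (showing that an h-irreducible $p$ generates a genuine prime ideal) appear essentially verbatim in the paper.

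The gap is the last sentence of Step~2. Knowing that every $f_E$ is prime and that every \emph{homogeneous} non-unit factors into such primes does not by itself give unique factorization of \emph{arbitrary} elements. A non-homogeneous element need not be a product of the $f_E$'s at all (think of $1+f_E$), so you have produced no factorization for it; and if instead you try Kaplansky's criterion, you must show that every nonzero prime ideal of $R(X)$ contains a nonzero homogeneous element, which is precisely what is in question. In other words, you have not established that $R(X)$ is atomic or Krull, and without such input the passage from ``homogeneous elements factor into primes'' to ``$R(X)$ is a UFD'' is not automatic.

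The paper fills this gap geometrically. It constructs the universal torsor $\widehat{X}\to X$, shows $R(X)=\Of(\widehat{X})$ with $\widehat{X}$ smooth quasi-affine, and concludes that $R(X)$ is a Krull ring. A hypothetical non-principal height-one prime then corresponds to a prime divisor on $\widehat{X}$; by Lemma~\ref{l2} (every Weil divisor on a normal $T$-variety is linearly equivalent to a $T$-invariant one) this divisor may be taken $T$-invariant, so the ideal is homogeneous and contains an h-irreducible $p$. Only \emph{then} does the leading-term argument apply to give $(p)=I$, the desired contradiction. The torsor/Krull step and Lemma~\ref{l2} are doing real work that your purely algebraic sketch omits; note also that the paper's alternate route (Proposition~\ref{pr9}) assumes finite generation, which $R(X)$ need not satisfy, and still invokes Lemma~\ref{l2}.
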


Our main aim is to show that factoriality of $R(X)$ reflects the fact that any effective Weil divisor on $X$ is a unique
non-negative integral combination of prime divisors. This observation immediately implies that the multigraded ring 
$R(X)$ is "factorial on the set of homogeneous elements" (we call this a priori weaker property "graded factoriality"). Further, 
we prove that factoriality follows from graded factoriality. This approach is realized in Section~\ref{sec1}.

In Section~\ref{sec2}, the Cox ring $R(X)$ is defined in the case when the divisor class group 
has torsion. Following \cite{BH1} and \cite{Ha}, we check that $R(X)$ is well-defined. 
Here the ring $R(X)$ is also factorially graded, but factoriality may be lost.
The corresponding examples are given in Section~\ref{sec3}, where we describe the Cox ring
of a homogeneous space of an affine algebraic group.

 The auhtor is grateful to J.~Hausen for valuable discussions which led
to some of the results given below. 


\section{Proof of Theorem~\ref{t1}} \label{sec1}

We start with some elementary properties of multigraded algebras. Let $R$ be a commutative associative
algebra with unit over a field $\KK$. Assume that $R$ is graded by the lattice $\ZZ^n$,
$$
R=\bigoplus_{u\in\ZZ^n} R_u.
$$
Denote by $R^{\times}$ (resp. $R^+$) 
the multiplicative semigroup of invertible (resp. homogeneous) elements of $R$. 

\begin{lemma} \label{lem1}
\begin{enumerate}
\item Suppose that for any $a,b\in R^+$ the condition $ab=0$ implies $a=0$ or $b=0$.
Then $R$ has no zero-divisors.
\item If $R$ has no zero-divisors and for any $a,b\in R^+$ the condition $ab=1$ implies $a,b\in R_0$,
then $R^{\times}=R_0^{\times}$. 
\item Suppose that $R$ has no zero-divisors. If $a\in R^+$ and $a=bc$, then $b,c\in R^+$.
\end{enumerate}
\end{lemma}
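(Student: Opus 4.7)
The plan is to fix a total order $\preceq$ on $\ZZ^n$ compatible with addition (the lexicographic order will do), so that every nonzero element of $R$ has a well-defined leading and trailing homogeneous component. All three statements will then follow by inspecting these extreme components. I would prove them in the order (i), (iii), (ii), since the argument for (ii) will invoke (iii).

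For (i), take nonzero $x,y\in R$ with homogeneous decompositions $x=\sum_u x_u$ and $y=\sum_v y_v$. Let $u_0$ and $v_0$ be the $\preceq$-maximal degrees with $x_{u_0}\neq 0$ and $y_{v_0}\neq 0$. In the expansion of $xy$, the homogeneous component of degree $u_0+v_0$ receives contributions only from pairs $(u,v)$ with $u+v=u_0+v_0$; by maximality, any such pair forces $u=u_0$ and $v=v_0$. Hence this component equals $x_{u_0}y_{v_0}$, which is nonzero by the hypothesis on $R^+$, so $xy\neq 0$.

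For (iii), assume $R$ has no zero-divisors and that $a=bc$ with $a$ homogeneous of degree $u$. Decompose $b$ and $c$ and let $v_0\preceq v_1$ and $w_0\preceq w_1$ be the minimal and maximal degrees of their nonzero homogeneous components. The same leading/trailing extraction, now using that $R$ is a domain, shows that the products $b_{v_0}c_{w_0}$ and $b_{v_1}c_{w_1}$ are nonzero, and that they equal the homogeneous components of $bc$ in degrees $v_0+w_0$ and $v_1+w_1$ respectively. Since $bc=a$ is concentrated in the single degree $u$, we get $v_0+w_0=v_1+w_1=u$, which forces $v_0=v_1$ and $w_0=w_1$. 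Thus $b$ and $c$ are homogeneous. For (ii), suppose $a\in R^{\times}$ with inverse $b$. Then $1=ab$ is homogeneous of degree $0$, so by (iii) both $a$ and $b$ lie in $R^+$. The stated hypothesis then forces $a,b\in R_0$, giving $R^{\times}\subseteq R_0^{\times}$; the reverse inclusion is trivial.

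\textbf{Main obstacle.} There is essentially no obstacle here: the whole argument rests on the standard trick of choosing an additive total order on the grading group and reading off leading and trailing terms. The only mild subtleties are to verify the no-cancellation statement at the extremal degrees carefully, and to keep track of the logical order (i)$\Rightarrow$(iii)$\Rightarrow$(ii).
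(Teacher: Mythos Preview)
Your argument is correct and follows exactly the approach of the paper: fix the lexicographic order on $\ZZ^n$, associate to each nonzero element its leading and lowest homogeneous terms, and use the multiplicativity $L(xy)=L(x)L(y)$, $l(xy)=l(x)l(y)$ to extract the conclusions. The only cosmetic difference is that you spell out the details and route (ii) through (iii), whereas the paper records the two multiplicativity identities and leaves the rest to the reader.
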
 

\begin{proof}
Let us fix the lexicographic order on the lattice $\ZZ^n$. With any element $a\in R$
one associates two homogeneous elements $L(a)$ and $l(a)$, namely its leading and 
lowest terms. Clearly, $L(ab)=L(a)L(b)$ and $l(ab)=l(a)l(b)$. Now the statements of the lemma
follow easily.
\end{proof}

\begin{corollary}
\begin{enumerate}
\item The Cox ring $R(X)$ has no zero-divisors.
\item The semigroup $R(X)^{\times}$ coincides with $\Of(X)^{\times}$, 
where $\Of(X)$ is the algebra of regular functions on the variety $X$.
\end{enumerate}
\end{corollary}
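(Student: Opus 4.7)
\medskip

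\noindent\textbf{Proof plan.} Both statements follow by applying Lemma~\ref{lem1} to the natural $K$-grading on $R(X)$. Since $K$ is a sublattice of $\WDiv(X)$ mapping isomorphically onto the finitely generated free group $\Cl(X)$, the lattice $K$ itself is free of finite rank, so the setup of Lemma~\ref{lem1} applies once we identify $K \cong \ZZ^n$.

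For part (i), the plan is to verify the hypothesis of Lemma~\ref{lem1}(i). Every homogeneous element of $R(X)$ belongs to some component $\Of(X,D) \subset \KK(X)$, and by the very definition of $R(X)$ the product of two homogeneous elements is computed inside the field $\KK(X)$. Hence if $a,b \in R(X)^+$ and $ab=0$ in $R(X)$, then the same identity holds in $\KK(X)$, which is a domain, so $a=0$ or $b=0$.

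For part (ii), granted (i), I would invoke Lemma~\ref{lem1}(ii). The inclusion $\Of(X)^{\times} \subset R(X)^{\times}$ is immediate from $R(X)_0 = \Of(X,0) = \Of(X)$. For the converse, suppose $a \in \Of(X,D)$ and $b \in \Of(X,D')$ are homogeneous with $ab = 1$. Since $1$ has degree $0$ and $ab$ has degree $D+D'$, and the grading lattice $K$ is torsion-free, we get $D+D' = 0$. The inequalities $\div(a)+D \ge 0$ and $\div(b)-D \ge 0$, combined with $\div(a) + \div(b) = \div(1) = 0$, force $\div(a) = -D$. Thus $D = -\div(a)$ is a principal divisor in $K$, so its image in $\Cl(X)$ is zero. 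Because $K \to \Cl(X)$ is an isomorphism, $D = 0$, and hence $a,b \in \Of(X,0) = \Of(X)$, which verifies the hypothesis of Lemma~\ref{lem1}(ii) and yields $R(X)^{\times} = R(X)_0^{\times} = \Of(X)^{\times}$.

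There is no genuine obstacle; the content is a careful unwinding of definitions. The one point that deserves attention is the final step of (ii), where the injectivity of $K \to \Cl(X)$ is used to conclude that the only principal element of $K$ is $0$. This is precisely where torsion-freeness of $\Cl(X)$ enters, and the argument foreshadows the fact, advertised in the introduction, that the conclusion of (ii) can fail once $\Cl(X)$ has torsion.
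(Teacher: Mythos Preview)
Your proof is correct and follows essentially the same route as the paper: apply Lemma~\ref{lem1}(i) using that homogeneous components multiply inside the field $\KK(X)$, and for (ii) verify the hypothesis of Lemma~\ref{lem1}(ii) by noting that the effective divisors $\div(a)+D$ and $\div(b)+D'$ sum to zero, forcing $D$ to be principal and hence zero by injectivity of $K\to\Cl(X)$. The paper's argument is terser but identical in substance; your explicit mention of this injectivity just makes visible a step the paper leaves implicit.
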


\begin{proof}
Statement (i) follows from Lemma~\ref{lem1}, (i), because $\KK(X)$ has no zero-divisors.
To prove (ii), note that if $f_1\in\Of(X,D_1)$, $f_2\in\Of(X,D_2)$ and
$f_1f_2=1$, then $D_1+D_2=0$, $\div(f_1)+\div(f_2)=0$, and the sum of the effective divisors
$\div(f_1)+D_1$ and $\div(f_2)+D_2$ equals zero. Therefore $\div(f_i)+D_i=0$, $i=1,2$, and 
$D_i=0$. Now one uses Lemma~\ref{lem1}, (ii) and the equality $R(X)_0=\Of(X)$.
\end{proof}

\begin{definition}
Let $A$ be a finitely generated abelian group, and $R=\oplus_{u\in A} R_u$ be an $A$-graded
algebra. 
\begin{itemize}
\item A non-zero element $a\in R^+\setminus R^{\times}$ is called {\it h-irreducible}, if 
the condition $a=bc$, $b,c\in R^+$ implies that either $b$ or $c$ is invertible.
\item An $A$-graded algebra $R$ is said to be {\it factorially graded}, if any its non-zero
non-invertible homogeneous element may be expressed as a product of h-irreducible elements,
and such an expression is unique up to association and renumbering. 
\end{itemize}
\end{definition}

\begin{remark}
Assume that $R=\oplus_{u\in\ZZ^n} R_u$ has no zero-divisors. It follows from Lemma~\ref{lem1} (iii)
that if $R$ is factorial, then it is factorially graded. 
\end{remark}

\begin{proposition} \label{pr}
The Cox ring $R(X)=\bigoplus_{D\in K} \Of(X,D)$ is factorially graded.
\end{proposition}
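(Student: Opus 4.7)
The plan is to translate the statement into the well-known unique decomposition of effective Weil divisors into prime components, using the fact that under the hypothesis $K \xrightarrow{\sim} \Cl(X)$, a homogeneous element of $R(X)$ is determined up to a unit by its associated effective divisor.

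First, I will fix, for every prime divisor $E$ on $X$, a distinguished homogeneous generator. Concretely, since $K \to \Cl(X)$ is an isomorphism, there is a unique $D_E \in K$ with $[D_E] = [E]$ in $\Cl(X)$. Choose some $h_E \in \KK(X)^{\times}$ with $\div(h_E) = E - D_E$; then $h_E \in \Of(X,D_E)^+$ and $\div(h_E)+D_E = E$. Any other choice differs by a factor in $\Of(X)^{\times} = R(X)^{\times}$ (by the corollary above). I will call such an $h_E$ a \emph{prime generator}.

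Next, given any non-zero homogeneous $f \in \Of(X,D)$, write the effective divisor $\div(f)+D = \sum_{i=1}^r m_i E_i$ as its unique decomposition into prime components with $m_i \ge 0$. Form the element $g := \prod_i h_{E_i}^{m_i}$, which lies in $\Of\bigl(X,\sum_i m_i D_{E_i}\bigr)$ and satisfies $\div(g)+\sum_i m_i D_{E_i} = \sum_i m_i E_i = \div(f)+D$. In particular $D$ and $\sum_i m_i D_{E_i}$ have the same class in $\Cl(X)$, hence coincide in $K$; and $f/g \in \KK(X)^{\times}$ has zero divisor on $X$, so it is an invertible regular function, i.e.\ a unit in $R(X)$. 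This gives the existence of a factorization $f = u \cdot \prod_i h_{E_i}^{m_i}$ with $u \in R(X)^{\times}$.

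For irreducibility and uniqueness, I will show that an element $a \in R(X)^+$ is h-irreducible if and only if its associated effective divisor is a single prime. One direction: if $\div(a)+\deg(a) = E$ is prime and $a = bc$ with $b, c \in R^+$, then the two effective divisors $\div(b)+\deg(b)$ and $\div(c)+\deg(c)$ sum to $E$, so one of them is zero, and the same argument as in the previous paragraph forces the corresponding factor to be a unit. Conversely, if $\div(a) + \deg(a)$ splits as $E + E'$ with both summands non-zero and effective, then the previous paragraph produces a factorization $a = u \cdot h_E^{m} h_{E'}^{m'} \cdots$ exhibiting $a$ as a product of two non-units. Combined with the existence step, this identifies h-irreducibles with prime generators up to association; the unique factorization of $\div(f)+D$ into primes then gives uniqueness of the homogeneous factorization of $f$ up to units and reordering. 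The only mildly delicate point will be verifying that $\div(g) = 0$ implies $g \in \Of(X)^{\times}$, which follows from normality of $X$, and that the matching of $K$-degrees really uses that $K \to \Cl(X)$ is injective—both are exactly where the free-finitely-generated hypothesis on $\Cl(X)$ enters.
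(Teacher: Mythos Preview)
Your argument is correct and follows essentially the same route as the paper: the paper's proof is the terse observation that association classes of nonzero homogeneous elements of $R(X)$ are in bijection with effective Weil divisors on $X$ (with product corresponding to sum), so h-irreducibles correspond to prime divisors and unique prime decomposition of effective divisors yields graded factoriality. You have simply unpacked this bijection explicitly by choosing prime generators $h_E$ and writing out the factorization $f = u\prod h_{E_i}^{m_i}$, together with the verification that the degrees match in $K$ and that $\div(g)=0$ forces $g\in\Of(X)^\times$; these are exactly the details behind the paper's one-paragraph sketch.
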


\begin{proof}
Effective Weil divisors on $X$ are in one-to-one correspondence with classes
of associated elements of $R(X)^+$, and the product of homogeneous elements corresponds to the sum
of divisors. This shows that classes of h-irreducible elements of the ring $R(X)$ 
correspond to prime divisors. Since any effective Weil divisor is a unique non-negative
integral combination of prime divisors, the ring $R(X)$ is factorially graded.
\end{proof}

Below we shall need the following well-known lemma. For convenience of the reader we give a short
proof, cf.~\cite[Prop.~17.1]{Ti}.

\begin{lemma}\label{l2}
Let $T$ be an algebraic torus and $Z$ a normal algebraic variety with a regular
$T$-action. Then any Weil divisor on $Z$ is linearly equivalent to a $T$-invariant
Weil divisor.
\end{lemma}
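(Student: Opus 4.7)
My plan is to transfer the problem to $T \times Z$ via the action morphism $\alpha \colon T \times Z \to Z$, $(t,z) \mapsto tz$, and the projection $\pi \colon T \times Z \to Z$, exploiting the factoriality of the torus.

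Since $T \cong (\KK^{\times})^n$ is a principal open subset of affine space, it is factorial, and in particular $\Cl(T) = 0$. A standard K\"unneth-type argument then yields that $\pi^* \colon \Cl(Z) \to \Cl(T \times Z)$ is an isomorphism, with inverse the restriction of classes to $\{e\} \times Z$. Given a Weil divisor $D$ on $Z$, the divisor $\alpha^* D - \pi^* D$ on $T \times Z$ restricts to $D - D = 0$ on $\{e\} \times Z$ and is therefore principal: $\alpha^* D - \pi^* D = \div(F)$ for some $F \in \KK(T \times Z)^{\times}$. The restriction $F|_{\{e\} \times Z}$ then has trivial divisor, hence lies in $\Of(Z)^{\times}$, and after dividing $F$ by its $\pi$-pullback I may assume $F|_{\{e\} \times Z} \equiv 1$.

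The divisor $D + \div(f)$ is $T$-invariant precisely when $\alpha^*(D + \div(f)) = \pi^*(D + \div(f))$ as divisors on $T \times Z$, equivalently when $F \cdot \alpha^* f / \pi^* f$ is an invertible regular function there. By Rosenlicht's lemma every invertible regular function on $T \times Z$ factors as $\chi(t) v(z)$ with $\chi$ a character of $T$ and $v \in \Of(Z)^{\times}$, so the task reduces to expressing $F$, modulo such units, in the form $\pi^* f / \alpha^* f$ for some $f \in \KK(Z)^{\times}$.

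I expect this last extraction of $f$ to be the main obstacle, and it is cohomological in flavour. The divisor equation forces $F(t_1 t_2, \cdot)$ and $F(t_1, t_2\,\cdot) F(t_2, \cdot)$ to agree up to a unit on $Z$, so $t \mapsto F(t, \cdot)$ is a 1-cocycle of $T$ with values in $\KK(Z)^{\times}/\Of(Z)^{\times}$; one needs a Hilbert~90 style trivialisation of this cocycle. Concretely, viewing $F$ as an element of $\KK(Z)(t_1, \ldots, t_n)$ and expanding in the torus coordinates, the normalisation $F(e,\cdot) \equiv 1$ lets one read off the desired $f$ from the components of $F$, thereby producing a $T$-invariant representative of the class of $D$.
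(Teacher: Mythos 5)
Your route is genuinely different from the paper's. The paper deletes the singular locus (codimension $\ge 2$ by normality, so nothing changes for Weil divisors), assumes $D$ effective, notes that $D$ is then Cartier on the smooth model, invokes the existence of a $T$-linearization of $\Of_Z(D)$ from [KKLV], and takes a $T$-eigenvector of the resulting rational $T$-module $\Of(Z,D)$; its divisor is the required invariant representative. You instead work with the function $F$ on $T\times Z$ cutting out $\alpha^*D-\pi^*D$, which trades linearization theory for the isomorphism $\Cl(Z)\cong\Cl(T\times Z)$ and Rosenlicht's unit theorem. Your first two paragraphs are correct and standard.

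The last step, which you rightly identify as the crux, is not yet a proof, and as phrased it would not go through. First, a general element of $\KK(T\times Z)^{\times}=\Quot\bigl(\KK(Z)[t_1^{\pm1},\dots,t_n^{\pm1}]\bigr)$ cannot be ``expanded in the torus coordinates''; you need $F$ to be an honest Laurent polynomial over $\KK(Z)$. This is obtained by first reducing to $D$ prime (hence effective), as the paper also does: then the polar divisor of $F$ is contained in $\pi^*D$, none of whose components dominates $Z$, and since $\KK(Z)[t^{\pm1}]$ is a UFD whose height-one primes correspond exactly to the prime divisors of $T\times Z$ dominating $Z$, one gets $F=\sum_m c_m(z)t^m$ with $c_m\in\KK(Z)$. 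Second, it is not the normalization $F(e,\cdot)\equiv1$ that produces $f$; that normalization (together with one more application of Rosenlicht on $T\times T\times Z$) only upgrades ``equal up to a unit'' to the exact identity $F(t_1t_2,z)=F(t_1,t_2z)F(t_2,z)$. The function $f$ comes from comparing coefficients of $t_1^m$ in this identity, which gives $c_m(z)t_2^m=c_m(t_2z)F(t_2,z)$ for every $m$; hence for any $m$ with $c_m\ne0$ one has $F(t,z)=t^m\,c_m(z)/c_m(tz)$, so $f=c_m$ works: $\alpha^*(D+\div(c_m))-\pi^*(D+\div(c_m))=\div(t^m)=0$, i.e.\ $D+\div(c_m)$ is $T$-invariant. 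With the reduction to effective $D$ and this coefficient comparison supplied, your argument closes and gives a self-contained alternative to the linearization proof.
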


\begin{proof}
By normality, the singular locus of $Z$ has codimension $\ge 2$, and one may assume that $Z$ is smooth. 
Then any Weil divisor $D$ on $Z$ is Cartier. We may assume that $D$ is effective. 
There exists a $T$-linearization of the corresponding
line bundle, which defines a structure of a rational $T$-module on
$\Of(Z,D)$~\cite[2.4]{KKLV}. Any $T$-eigenvector in $\Of(Z,D)$ represents a $T$-invariant divisor 
equivalent to $D$.
\end{proof}

Again by normality, the passage from $X$ to its smooth locus does not change the Cox ring.
Further we shall assume that $X$ is smooth. Following~\cite{BH1}, let us introduce the "universal torsor"
$\hat X\to X$ over $X$. Consider the $K$-graded sheaf of $\Of_X$-algebras 
$$
\Rf_X=\oplus_{D\in K} \Of(D), \ \ \ \text{where} \ \ \ \Of(U,D)=\{\, f\in\KK(X) \,\mid\, [\div(f)+D]|_U\ge 0\,\},
$$
and the relative spectrum $\widehat{X}=\Spec_X(\Rf_X)$ of this sheaf over $X$. 
Clearly, $R(X)=H^0(X,\Rf_X)=\Of(\widehat{X})$. The $K$-grading on $\Rf_X$
defines a regular action of the torus $T=\Spec(\KK[K])$ on $\widehat{X}$, and the canonical 
affine morphism $p:\widehat{X}\to X$ given by inclusion $\Of_X\subset\Rf_X$ is
$T$-invariant. Since all divisors on $X$ are Cartier,
$p$ is a locally trivial fibration with $T$ as a fiber. In particular, $\widehat{X}$ is smooth.
Fix an open affine covering
$\{U_i\}$ of $X$. Each divisor $D_i=X\setminus U_i$ corresponds to an element
$f_i\in R(X)^+$. The section $f_i$ of the sheaf $\Rf_X$ is invertible exactly over $U_i$.
Therefore $p^{-1}(U_i)$ coincides with $\widehat{X}_{f_i}=\{z\in\widehat{X} : f_i(z)\ne 0\}$.
We get an open affine covering $\{\widehat{X}_{f_i}\}$ of $\widehat{X}$,
where $f_i\in\Of(\widehat{X})$. It is easy to deduce from this that $\widehat{X}$ is a quasi-affine variety,
see.~\cite[Ch.~2, App., Lemma~8]{Gr}.

\smallskip

We are ready to finish with the proof of Theorem~\ref{t1}. Assume that the ring $R(X)$ 
is not factorial. Then $R(X)$ contains a non-principal prime ideal of height one. 
Since $R(X)=\Of(\widehat{X})$, it is a Krull ring, and its prime ideals of height one are in bijection 
with essential discrete valuations of the ring $R(X)$, see
\cite[I.3]{Sa}. On the other hand, prime divisors on a normal quasi-affine variety are in bijection
with essential discrete valuations of the ring of regular functions.
Consequently, there is a non-principal divisor on $\widehat{X}$.
By Lemma~\ref{l2}, one may assume that there is a prime $T$-invariant non-principal divisor on $\widehat{X}$.
The corresponding ideal $I\lhd R(X)$ contains a homogeneous, and even an h-irreducible element.
Suppose that for some $a,b\in R(X)$ the product
$ab$ is divisible by $p$, but any homogeneous component of both $a$ and $b$ is not divisible by $p$.
Since $R(X)$ is factorially graded, one considers the product of leading components of $a$ and $b$ and comes to 
contradiction. Thus the ideal $(p)\lhd R(X)$ is prime. The inclusion $(p)\subseteq I$
implies the equality $(p)=I$. This contradiction completes the proof of Theorem~\ref{t1}. 

\medskip

We finish this section with the following observation.

\begin{proposition} \label{pr9}
Let $R=\oplus_{u\in\ZZ^n} R_u$ be a multigraded finitely generated $\KK$-algebra without
zero divisors. Assume that $R$ is factorially graded. Then $R$ is factorial.
\end{proposition}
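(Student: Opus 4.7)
The plan is first to show that every h-irreducible element of $R$ is prime in $R$ (in the ungraded sense), and then to conclude by Nagata's criterion for factoriality. Concretely, let $S\subset R$ denote the multiplicative set generated by the h-irreducible elements. By graded factoriality, every nonzero homogeneous element of $R$ is either a unit or a product of h-irreducibles, so it is invertible in the localization $R[S^{-1}]$. Once we verify that the generators of $S$ are prime in $R$ and that $R[S^{-1}]$ is factorial, Nagata's criterion (which applies since $R$ is a Noetherian domain by Hilbert's basis theorem) will give the conclusion.

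The heart of the argument is showing that an h-irreducible element $p\in R^+$ is prime in $R$. First I would record the homogeneous version: if $p\mid ab$ in $R$ with $a,b\in R^+$, then by Lemma~\ref{lem1}~(iii) the cofactor $ab/p\in R$ is again homogeneous, so the equality $ab=p\cdot(ab/p)$ is an identity of homogeneous elements, and the uniqueness part of graded factoriality forces $p$ to be associate to an h-irreducible factor of $a$ or of $b$. To extend to arbitrary $a,b\in R$, fix the lexicographic order on $\ZZ^n$ and induct on the total number of nonzero homogeneous components of $a$ and $b$. From $L(ab)=L(a)L(b)$ and $L(pc)=p\,L(c)$ one gets $p\mid L(a)L(b)$ in $R^+$, so by the homogeneous case $p$ divides $L(a)$ or $L(b)$; writing say $L(a)=pq$, the element $a-pq$ has strictly fewer homogeneous components, still satisfies $p\mid (a-pq)b$, and the induction closes.

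With h-irreducibles known to be prime, I would analyze $R[S^{-1}]$ directly. By construction it is a $\ZZ^n$-graded domain in which every nonzero homogeneous element is invertible. The subset $D\subseteq\ZZ^n$ of degrees $u$ with $(R[S^{-1}])_u\neq 0$ is then a subgroup, hence a free abelian group of finite rank, and the degree-zero part $F_0:=(R[S^{-1}])_0$ is a field. Choosing a $\ZZ$-basis $e_1,\dots,e_k$ of $D$ and lifts $t_i\in(R[S^{-1}])_{e_i}\setminus\{0\}$, each $t_i$ is invertible and one checks that $R[S^{-1}]=F_0[t_1^{\pm 1},\dots,t_k^{\pm 1}]$ is a Laurent polynomial ring over a field, and so in particular factorial.

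To assemble the proof, apply Nagata's criterion: since $R$ is a Noetherian domain, $S$ is generated by prime elements, and $R[S^{-1}]$ is factorial, $R$ itself is factorial. The principal obstacle is the leading-term induction showing that h-irreducibles are prime in $R$; the localization analysis and the appeal to Nagata are then routine.
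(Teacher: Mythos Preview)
Your argument is correct. The core step---that every h-irreducible element is prime in $R$ via a leading-term induction---is exactly the mechanism the paper uses as well (it appears first in the proof of Theorem~\ref{t1} and is invoked again here by the phrase ``the arguments given above''). So on the main point you and the paper agree.

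Where you diverge is in how you finish. The paper takes a geometric route: it first proves that $R$ is integrally closed (using the torus action and graded factoriality to show $\overline{R}=R$), then translates factoriality into the vanishing of $\Cl(\Spec R)$, observes that the leading-term argument makes every prime $T$-invariant divisor principal, and finally appeals to Lemma~\ref{l2} to reduce arbitrary divisors to $T$-invariant ones. Your route is purely ring-theoretic: once h-irreducibles are prime, Nagata's criterion reduces the question to the localization $R[S^{-1}]$, which you identify as a Laurent polynomial ring over a field. Your approach is more self-contained and avoids the geometric input (normality, class groups, the linearization argument behind Lemma~\ref{l2}); the paper's approach, on the other hand, yields normality of $R$ as a byproduct and fits the divisor-theoretic narrative of the article. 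Both are valid and of comparable length; yours is arguably the cleaner stand-alone proof of the proposition as stated.
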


\begin{proof}
Let us show that $R$ integrally closed. The multigrading defines actions of an $n$-dimensional torus
$T$ on the algebra $R$ and its quotient field $\Quot(R)$ by automorphisms. It is known that the integral closure
$\overline{R}$ of $R$ in $\Quot(R)$ is a $T$-invariant subalgebra, and the $T$-action defines a structure of a 
rational $T$-module on $\overline{R}$. In particular, $\overline{R}$ is $\ZZ^n$-graded, and $R$ is its
homogeneous subalgebra. Any homogeneous element 
$r\in\overline{R}$ may be expressed as $r=\frac{r_1}{r_2}$, $r_1,r_2\in R^+$.   
Since $R(X)$ is factorially graded and $\frac{r_1}{r_2}$ is integral over $R$, 
one has $\frac{r_1}{r_2}\in R^+$ and $\overline{R}=R$.

 It is well known that $R$ is factorial if and only if $\Cl(Z)=0$ for the normal affine variety 
$Z=\Spec(R)$. The arguments given above show that any prime $T$-invariant divisor on $Z$ is principal, 
and Lemma~\ref{l2} completes the proof.
\end{proof}

\begin{remark}
As in the proof of Proposition~\ref{pr9}, one may show that in characteristic zero 
a factorially graded finitely generated  algebra $R=\oplus_{u\in A} R_u$ 
without zero divisors is integrally closed
for any finitely generated abelian group $A$. 
\end{remark}


\section{Torsion in the divisor class group} \label{sec2}

In this section we define the Cox ring $R(X)$ for a variety $X$ with arbitrary finitely generated divisor class group 
and check that $R(X)$ is well-defined, compare \cite{BH1}, \cite{Ha}.

Let $S\subset\WDiv(X)$ be a finitely generated subgroup that projects to $\Cl(X)$ surjectively.
Consider a ring 
$$
T_S(X)=\bigoplus_{D\in S} \Of(X, D).
$$ 
Let $S^0\subset S$ be the kernel of the projection $S\to\Cl(X)$.
Take compatible bases $D_1,\dots, D_s$ in $S$ and $D_1^0=d_1D_1,\dots,D_r^0=d_rD_r$ in $S^0$, $r\le s$. 

\smallskip

We call a family of rational functions $\Ff=\{F_D\in\KK(X)^{\times} : D\in S^0\}$ {\it coherent},
if $\div(F_D)=D$ and $F_{D+D'}=F_DF_{D'}$ (the term "shifting family" was used in~\cite{BH1} for a similar notion). Obviously, the family   
$\Ff$ is defined by $F_{D_i^0}$, $i=1,\dots,r$: if $D=a_1D_1^0+\dots+a_rD_r^0$,
then $F_D=F_{D_1^0}^{a_1}\dots F_{D_r^0}^{a_r}$. Let us fix a coherent family $\Ff$.

Let $D_1,D_2\in S$ and $D_1-D_2\in S^0$. A map $f\to F_{D_1-D_2}f$ is an isomorphism between vector spaces 
$\Of(X,D_1)$ and $\Of(X,D_2)$. One easily checks, that the linear span of elements
$f-F_{D_1-D_2}f$ over all $D_1,D_2$ with $D_1-D_2\in S^0$ and all $f\in\Of(X,D_1)$ is an ideal $I(S,\Ff)$ of $T_S(X)$.
Define the Cox ring of the variety $X$ as 
$$
R_{S,\Ff}(X)=T_S(X)/I(S,\Ff).
$$

\smallskip

Since $D/D^0\cong\Cl(X)$, the ring $R_{S,\Ff}(X)$ carries a natural $\Cl(X)$-grading.

\begin{lemma}
Assume that $\Of(X)^{\times}=\KK^{\times}$. Then the ring $R_{S,\Ff}(X)$
does not depend on a choice of $\Ff$ up to isomorphism.
\end{lemma}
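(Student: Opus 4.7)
The plan is to build an explicit isomorphism between $R_{S,\Ff}(X)$ and $R_{S,\Ff'}(X)$ coming from an automorphism of $T_S(X)$ that intertwines the two ideals $I(S,\Ff)$ and $I(S,\Ff')$.

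First I would compare the two families. For each $D \in S^0$, the rational function $F'_D/F_D$ has trivial divisor, so it is a unit of $\Of(X)$. Using the hypothesis $\Of(X)^\times = \KK^\times$, this forces $F'_D = \alpha(D) F_D$ for some $\alpha(D) \in \KK^\times$. The coherence condition $F_{D+D'} = F_D F_{D'}$ (and the analogue for $\Ff'$) then implies that the map $\alpha\colon S^0 \to \KK^\times$ is a group homomorphism.

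Next I would extend $\alpha$ to a character $\tilde\alpha\colon S \to \KK^\times$ of the whole group. Since $\KK$ is algebraically closed, $\KK^\times$ is a divisible abelian group, and the existence of the compatible bases $D_1,\dots,D_s$ of $S$ and $D_i^0 = d_i D_i$ of $S^0$ makes the extension completely explicit: choose $\tilde\alpha(D_i)$ to be any $d_i$-th root of $\alpha(D_i^0)$ for $i \le r$, and any nonzero scalar for $i > r$. Now I would define $\varphi\colon T_S(X) \to T_S(X)$ by setting $\varphi(f) = \tilde\alpha(D) f$ on each homogeneous component $\Of(X,D)$ and extending $\KK$-linearly. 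Multiplicativity of $\tilde\alpha$ makes $\varphi$ a graded ring automorphism, with inverse given by the character $\tilde\alpha^{-1}$.

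The key verification is that $\varphi(I(S,\Ff)) = I(S,\Ff')$. For a generator $f - F_{D_1 - D_2} f$ with $f \in \Of(X,D_1)$, applying $\varphi$ gives
\[
\tilde\alpha(D_1) f - \tilde\alpha(D_2) F_{D_1-D_2} f = \tilde\alpha(D_1)\bigl(f - \tilde\alpha(D_2 - D_1) F_{D_1 - D_2} f\bigr),
\]
and since $\tilde\alpha(D_2 - D_1) = \alpha(D_2 - D_1) = \alpha(D_1 - D_2)^{-1}$ on $S^0$, the bracket becomes $f - F'_{D_1 - D_2} f$ (up to the relation $F'_{D_1-D_2} = \alpha(D_1-D_2) F_{D_1-D_2}$, absorbed by choosing $\tilde\alpha|_{S^0} = \alpha^{-1}$). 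The symmetric statement holds for $\varphi^{-1}$, so $\varphi$ descends to an isomorphism of the quotients $R_{S,\Ff}(X) \to R_{S,\Ff'}(X)$.

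The only real obstacle is the bookkeeping in the last step: one must fix the sign convention on $\tilde\alpha|_{S^0}$ so that the character scaling cancels precisely the scalar factor relating $F'_D$ to $F_D$. Apart from that, everything is formal, and the hypothesis $\Of(X)^\times = \KK^\times$ is used only once, at the very beginning, to guarantee that the two families differ by scalars rather than by global regular units.
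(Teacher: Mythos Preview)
Your argument is correct and is essentially the same as the paper's: the paper also observes that two coherent families differ by scalars $\gamma_i\in\KK^\times$ on the basis $D_i^0$, chooses $d_i$-th roots $\alpha_i$ of the $\gamma_i$ (your extension of the character from $S^0$ to $S$), and builds the graded automorphism of $T_S(X)$ that multiplies $\Of(X,D)$ by $\alpha_1^{a_1}\cdots\alpha_s^{a_s}$. Your write-up is more careful about verifying that the ideals match; just be sure to fix the sign convention you flag at the end (extend $\alpha^{-1}$, not $\alpha$, so that $\tilde\alpha(D_2-D_1)F_{D_1-D_2}=F'_{D_1-D_2}$).
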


\begin{proof}
The set of functions $F_{D_1^0},\dots,F_{D_r^0}$ is defined up to transformations $F_{D_i^0}\to \gamma_iF_{D_i^0}$,
$\gamma_i\in\KK^{\times}$. Fix elements $\alpha_i\in\KK^{\times}$, $i=1,\dots,r$, such that $\alpha_i^{d_i}=\gamma_i$,
and set $\alpha_{r+1}=\dots=\alpha_s=1$.  
Then the desired isomorphism of the quotient-rings is induced by an automorphism $T_S(X)\to T_S(X)$, which acts on the component
$\Of(X,D)$, $D=a_1D_1+\dots+a_sD_s$, via multiplication by $\alpha_1^{a_1}\dots\alpha_s^{a_s}$.
\end{proof}

\begin{proposition} \label{prop1}
Assume that $\Of(X)^{\times}=\KK^{\times}$. Then the ring $R_S(X)$ does not depend on a choice 
of $S$ up to isomorphism.
\end{proposition}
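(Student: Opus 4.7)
The plan is to reduce to the case $S\subset S'$ by introducing the intermediate subgroup $S''=S+S'\subset \WDiv(X)$, which is again a finitely generated subgroup projecting onto $\Cl(X)$. Once the proposition is established under an inclusion hypothesis, two applications ($S\subset S''$ and $S'\subset S''$) will yield $R_S(X)\cong R_{S''}(X)\cong R_{S'}(X)$, as required.

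Assuming $S\subset S'$, the next step is to build the comparison homomorphism. Fix a coherent family $\Ff'=\{F_D : D\in (S')^0\}$ and let $\Ff$ be its restriction to the subgroup $S^0\subset(S')^0$; this restriction is automatically coherent. Each generator $f-F_{D_1-D_2}f$ of $I(S,\Ff)$ is already a generator of $I(S',\Ff')$, so the natural graded inclusion $T_S(X)\hookrightarrow T_{S'}(X)$ carries $I(S,\Ff)$ into $I(S',\Ff')$ and induces a $\Cl(X)$-graded ring homomorphism
$$
\varphi\colon R_{S,\Ff}(X)\longrightarrow R_{S',\Ff'}(X).
$$
By the previous lemma (this is where $\Of(X)^{\times}=\KK^{\times}$ enters) neither ring depends up to isomorphism on the chosen coherent family, so it suffices to verify that $\varphi$ is an isomorphism.

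To verify this, I would identify the $\Cl(X)$-graded components on both sides with the same object. Choose a set-theoretic section $c\mapsto D_c\in S$ of the projection $S\to\Cl(X)$. For each $c\in\Cl(X)$ and each $\b D\in S$ with $[\b D]=c$, the map $f\mapsto F_{\b D-D_c}f$ carries $\Of(X,\b D)$ onto $\Of(X,D_c)$; assembling these gives a surjection
$$
\pi_c\colon \bigoplus_{\b D\in S,\,[\b D]=c}\Of(X,\b D)\longrightarrow \Of(X,D_c).
$$
Coherence of $\Ff$ makes $\pi_c$ vanish on the degree-$c$ part $I(S,\Ff)_c$, so $\pi_c$ descends to the $c$-graded component of $R_{S,\Ff}(X)$. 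The converse — that any element of $\ker\pi_c$ already lies in $I(S,\Ff)_c$ — will follow by writing each summand $f_{\b D}$ as $(f_{\b D}-F_{\b D-D_c}f_{\b D})+F_{\b D-D_c}f_{\b D}$: the first term is a defining generator of $I(S,\Ff)_c$, while the sum of the second terms vanishes in $\Of(X,D_c)$ by assumption. The same analysis, with the same element $D_c\in S\subset S'$ serving as representative, identifies the $c$-graded component of $R_{S',\Ff'}(X)$ also with $\Of(X,D_c)$, and by construction $\varphi$ induces the identity on each such component, hence is an isomorphism.

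The main obstacle I expect is precisely this identification of graded components, in particular the equality $\ker\pi_c=I(S,\Ff)_c$: one must organize the many generators $f-F_{D_1-D_2}f$ with $D_1-D_2\in S^0$ to reduce an arbitrary element of $\ker\pi_c$ to a normal form. The coherence identity $F_{D_1-D_2}F_{D_2-D_3}=F_{D_1-D_3}$ is exactly what makes this telescoping reduction work and guarantees that the ideal $I(S,\Ff)$ is not smaller than required.
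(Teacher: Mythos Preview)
Your argument is correct, but it proceeds differently from the paper's. The paper does not reduce to an inclusion $S\subset S'$; instead, given arbitrary $S$ and $M$ with $\rk(S)\ge\rk(M)$, it first proves a structural lemma producing a surjective group homomorphism $\phi\colon S\to M$ compatible with the projections to $\Cl(X)$ (obtained by matching up compatible bases of $S,S^0$ and $M,M^0$ via the Smith normal form). It then fixes a coherent family on $(S+M)^0$ and defines a direct surjection $\Phi\colon T_S(X)\to T_M(X)$ sending $f\in\Of(X,D)$ to $F_{D-\phi(D)}f\in\Of(X,\phi(D))$, checking that $\Ker\Phi\subset I(S,\Ff|_{S^0})$ and that $I(S,\Ff|_{S^0})$ surjects onto $I(M,\Ff|_{M^0})$. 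Your route trades this structural lemma for the reduction to $S\subset S'$ and an explicit identification of each $\Cl(X)$-graded piece with $\Of(X,D_c)$; this is more elementary and makes the graded structure of $R_S(X)$ transparent, at the cost of passing through the auxiliary group $S+S'$. The paper's approach is a single map built in one step, but relies on the elementary-divisor comparison of $S/S^0$ and $M/M^0$.
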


\begin{proof}
Let $M\subset\WDiv(X)$ be another finitely generated subgroup that projects to 
$\Cl(X)$ surjectively. One may assume that $\rk(S)\ge\rk(M)$. 

\begin{lemma}
There is a surjective homomorphism $\phi:S\to M$ such that the following
diagram is commutative:
$$ 
\xymatrix{
& 
S \ar[dr] \ar[rr]^{\phi} & & M \ar[dl]
&
\\
& &
\Cl(X)
& &
}
$$
\end{lemma}

\begin{proof}
Take compatible bases $D_1,\dots,D_s$ in $S$ and $d_1D_1,\dots,d_rD_r$ in $S^0$,
and also $M_1,\dots M_k$ in $M$ and $m_1M_1,\dots,m_pM_p$ in $M^0$, such that $d_i$ (resp. $m_i$)
is divisible by $d_{i+1}$ (resp. by $m_{i+1}$). The condition $S/S^0\cong\Cl(X)\cong M/M^0$ implies that 
$s-r=k-p$, and the sets $(d_1,\dots,d_r)$ and $(m_1,\dots,m_p)$ may differ only in the final group of units. 
It remains to put $\phi(D_i)=M_i$ for $i=1,\dots,p$, $\phi(D_i)=M_{i-r+p}$ for $i=r+1,\dots,s$, and $\phi(D_i)=0$
for other $i$.
\end{proof}

We return to the proof of Proposition~\ref{prop1}. Fix a coherent family $\Ff$ for the subgroup $(S+M)^0\subset S+M$,
and define a surjective homomorphism $\Phi: T_S(X)\to T_M(X)$ that sends a homogeneous component
$\Of(X,D)$ to $\Of(X,\phi(D))$ as $\Phi(f)=F_{D-\phi(D)}f$. The kernel of $\Phi$ is contained in
the ideal $I(S,\Ff|_{S^0})$, and the ideal $I(S,\Ff|_{S^0})$ itself maps surjectively to $I(M,\Ff|_{M^0})$.
This shows that $\Phi$ defines a homomorphism $R_S(X)\to R_M(X)$ that, in fact, is an isomorphism.
\end{proof}

Note that homogeneous elements of the ring $T_S(X)$ from different components corresponding to $D_1$ and $D_2$ with
$D_1-D_2\in S^0$ may define the same effective divisor on $X$. However, after factorization by the ideal
$I(S,\Ff)$ effective Weil divisors on $X$ are again in bijection with  
association classes in $R(X)^+$.

\begin{proposition} \label{pr7}
The ring $R(X)$ is factorially graded.
\end{proposition}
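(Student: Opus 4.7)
The plan is to mimic the proof of Proposition~\ref{pr}, invoking the bijection asserted in the paragraph preceding this proposition: after passing to the quotient $R(X) = T_S(X)/I(S,\Ff)$, association classes of nonzero homogeneous elements of $R(X)$ are in one-to-one correspondence with effective Weil divisors on $X$, with multiplication of elements corresponding to addition of divisors. Write $E(\bar f)$ for the effective divisor associated with a nonzero homogeneous $\bar f \in R(X)$; if $\bar f$ is lifted from $f \in \Of(X,D)$, then $E(\bar f) = \div(f) + D$, and the generating relations $f - F_{D_1 - D_2} f$ of $I(S,\Ff)$ ensure this assignment is well defined on the quotient.

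Under the bijection, h-irreducible elements of $R(X)$ correspond precisely to prime Weil divisors: a homogeneous $\bar f$ is h-irreducible if and only if $E(\bar f)$ admits no nontrivial decomposition as a sum of two effective divisors, which is the definition of a prime divisor. Given any nonzero non-invertible homogeneous $\bar f$, the divisor $E(\bar f)$ has a unique decomposition $\sum n_i P_i$ as a non-negative integer combination of prime divisors; lifting each $P_i$ to an h-irreducible representative $\bar f_i$ and using multiplicativity, one obtains a factorization $\bar f = \bar f_1^{n_1}\cdots \bar f_k^{n_k}$ up to association, and its uniqueness (modulo association and reordering) is inherited directly from the uniqueness of the prime decomposition of $E(\bar f)$.

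The only delicate point, which I regard as the main obstacle, is justifying the equivalence \emph{associated $\Leftrightarrow$ same divisor}. For the forward direction, any homogeneous unit $\bar u \in R(X)^{\times}$ satisfies $E(\bar u) + E(\bar u^{-1}) = E(1) = 0$; since effective divisors form a cancellative monoid in which $0$ is the only invertible element, this forces $E(\bar u) = 0$, so multiplication by $\bar u$ preserves $E$. For the converse, two homogeneous elements with equal divisors can be brought, via the $I(S,\Ff)$-relations, into a common component $\Of(X,D)$, where they then differ by a scalar in $\Of(X)^{\times}$. This is the step where the shifting family $\Ff$ plays its essential role; once it is in place the remainder of the argument is formal and the factorial grading follows.
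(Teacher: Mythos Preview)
Your proposal is correct and follows essentially the same approach as the paper: the paper's proof simply says ``the statement follows from the proof of Proposition~\ref{pr},'' relying on the bijection between association classes in $R(X)^+$ and effective Weil divisors asserted in the preceding paragraph. You have spelled out that bijection and the role of the shifting family $\Ff$ more explicitly than the paper does, but the underlying argument is identical.
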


\begin{proof}
The statement follows from the proof of Proposition~\ref{pr}.
\end{proof}

\begin{remark}
By \cite[Cor.~1.2]{EKW}, the ring $T_S(X)$ is factorial. As we shall see in the next section,
this property may be lost for the ring $R(X)$ if the group $\Cl(X)$ has torsion. 
\end{remark} 


\section{Homogeneous spaces of an algebraic group} \label{sec3}

In this section we assume that the ground field $\KK$ has characteristic zero.
Let $G$ be a connected affine algebraic group with $\Cl(G)=0$ and without nontrivial characters.
Note that the first condition may be achieved by passing to a finite covering of a given group $G$ 
\cite[Prop.~4.6]{KKLV}. Denote by $\Chi(F)$ the group of characters of an algebraic group $F$.  
By Rosenlicht's Theorem, the condition $\Chi(G)=0$ is equivalent to
$\Of(G)^{\times}=\KK^{\times}$. 

 Let $H$ be a closed subgroup of $G$. The homogeneous space $G/H$ admits a canonical structure
of a smooth quasi-projective algebraic $G$-variety. In \cite{Po}, it was proved that
$\Cl(G/H)\cong\Pic(G/H)\cong\Chi(H)$. Let us recall how to establish the last isomorphism.

Any character $\chi\in\Chi(H)$ defines a one-dimensional $H$-module $\KK_{\chi}$.
Consider a homogeneous fiber bundle 
$$
L_{\chi}=G\times_H\KK_{\chi}:=(G\times\KK_{\chi})/H, \ \ \ h\cdot(g,a):=(gh^{-1},\chi(h)a).
$$
It is shown in~\cite{Po} that the projection $L_{\chi}\to G/H$ is a $G$-linearized
line bundle over $G/H$, and $L_{\chi_1}\otimes L_{\chi_2}\cong L_{\chi_1+\chi_2}$. 
Moreover, the map $\chi\to L_{\chi}$ defines an isomorphism between $\Chi(H)$ and $\Pic(G/H)$.

 Since $\Cl(G)=0$, the pull-back of the line bundle $L_{\chi}$ with respect to the projection $G\to G/H$
is a trivial $G$-linearized line bundle on $G$. This allows to identify the space of  
sections $H^0(X,L_{\chi})$ with the following subspace of $\Of(G)$:
$$
\Of(G)^{(H)}_{\chi}:=\{f\in\Of(G) : f(gh^{-1})=\chi(h)f(g) \ \text{for} \ \text{all} \ h\in H, g\in G\}.
$$
The tensor product of sections corresponds to the product in $\Of(G)$. 

\smallskip

Consider a subgroup $H_1=\cap_{\chi\in\Chi(H)}\Ker(\chi)$. The next theorem delivers an effective 
description of the Cox ring of the homogeneous space $G/H$, see also \cite[Lemma~3.14]{AH}.

\begin{theorem} \label{t2}
Let $G$ be a connected affine algebraic group with
$\Chi(G)=0$ and $\Cl(X)=0$, and $H$ be a closed subgroup of $G$. Then
$$
R(G/H)\cong\Of(G/H_1).
$$
\end{theorem}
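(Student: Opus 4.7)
The plan is to decompose $\Of(G/H_1)$ by the weights of the natural right action of the quasitorus $H/H_1$ and to match these weight spaces with the graded pieces of the Cox ring $R(G/H)$.

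First I would verify the group-theoretic setup. Since characters of $H$ are homomorphisms into the abelian group $\KK^\times$, each $\Ker(\chi)$ is a closed normal subgroup of $H$, so $H_1$ is closed and normal in $H$. Taking finitely many generators of $\Chi(H)$ exhibits $H/H_1$ as a subgroup of a torus, hence a diagonalizable algebraic group, and every character of $H$ descends to $H/H_1$, giving $\Chi(H/H_1)=\Chi(H)$. Consequently $H/H_1$ acts by right translation on $G/H_1$ with quotient $G/H$, and the coordinate algebra $\Of(G/H_1)=\Of(G)^{H_1}$ decomposes as a rational $H/H_1$-module into weight spaces
$$
\Of(G/H_1) \;=\; \bigoplus_{\chi\in\Chi(H)} \Of(G/H_1)_\chi.
$$

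A function $f\in\Of(G)$ lies in $\Of(G/H_1)_\chi$ precisely when it is $H_1$-invariant and of $H/H_1$-weight $\chi$; since every character is trivial on $H_1$, this is equivalent to the semi-invariance $f(gh^{-1})=\chi(h)f(g)$ for all $h\in H$, so $\Of(G/H_1)_\chi=\Of(G)^{(H)}_\chi$. Combining this with the identification $H^0(G/H,L_\chi)\cong\Of(G)^{(H)}_\chi$ recalled before the theorem (which uses $\Cl(G)=0$), and noting that the tensor product of sections $L_{\chi_1}\otimes L_{\chi_2}\cong L_{\chi_1+\chi_2}$ corresponds to the pointwise product in $\Of(G)$, yields a $\Chi(H)$-graded ring isomorphism
$$
\Of(G/H_1) \;\cong\; \bigoplus_{\chi\in\Chi(H)} H^0(G/H,L_\chi).
$$

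It remains to identify this section algebra with the Cox ring $R(G/H)$ as defined in Section~\ref{sec2}. Since $G/H$ is smooth, $\Cl(G/H)=\Pic(G/H)=\Chi(H)$; choosing a finitely generated $S\subset\WDiv(G/H)$ surjecting onto $\Chi(H)$, one can select a coherent family $\Ff$ on $S^0$ whose rational functions $F_D$ come from the canonical tensor isomorphisms $L_{\chi_1}\otimes L_{\chi_2}\cong L_{\chi_1+\chi_2}$. The resulting $\Chi(H)$-graded surjection $T_S(G/H)\to \bigoplus_\chi H^0(G/H,L_\chi)$ then has kernel exactly $I(S,\Ff)$, identifying the right-hand side with $R(G/H)$. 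I expect this last step to be the main technical point: one must carefully check that the abstract presentation $T_S/I(S,\Ff)$ coincides, as a graded ring, with the $\Pic$-graded section algebra arising from the line bundles $L_\chi$, and that the choice of $\Ff$ is indeed consistent with the multiplicative structure coming from the $\Of(G)$-side under the two previous identifications.
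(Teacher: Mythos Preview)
Your approach is essentially the same as the paper's: decompose $\Of(G/H_1)$ into weight spaces for the right action of the diagonalizable group $Q=H/H_1$, identify each weight space with $\Of(G)^{(H)}_\chi$, and then match the result with the Cox ring construction of Section~\ref{sec2}.

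The only noteworthy difference is in the final identification. You pass through the line bundles $L_\chi$ and their global sections, and then propose to build $S$ and a coherent family $\Ff$ abstractly, leaving the verification that $T_S/I(S,\Ff)\cong\bigoplus_\chi H^0(G/H,L_\chi)$ as ``the main technical point''. The paper short-circuits this by choosing $S$ concretely inside $\KK(G)$: it picks a finitely generated multiplicative group of $Q$-semiinvariants in $\Quot(\Of(G/H_1))$ whose weights exhaust $\Chi(Q)$, and identifies this group with $S\subset\WDiv(G/H)$ via the bijection between $Q$-semiinvariant lines and Weil divisors on $G/H$. With this choice the coherent family on $S^0$ is simply given by the chosen semiinvariants of trivial weight themselves, and the Cox ring is visibly $\bigoplus_\chi \Of(G)^{(H)}_\chi\subset\Of(G)$. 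This makes your ``main technical point'' essentially automatic; you may want to adopt this device to close your argument cleanly.
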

 
\begin{proof}
The diagonalizable group $Q=H/H_1$ acts on $G/H_1$ 
by right multiplication. Since $\Chi(H)=\Chi(Q)$ and any rational $Q$-module is a direct sum of
one-dimensional submodules, we get $\Of(G/H_1)=\oplus_{\chi\in\Chi(H)}\Of(G)^{(H)}_{\chi}$. 

Weil divisors on $G/H$ are in bijection with lines generated by $Q$-semiinvariants $f\in\Quot(\Of(G/H_1))\subseteq\KK(G)$.
Further, effective divisors correspond to semiinvariants $f\in\Of(G/H_1)$.
Let us choose a multiplicative finitely generated group of semiinvariants in $\Quot(\Of(G/H_1))$ whose weights run through
the whole group $\Chi(Q)$. One may identify this subgroup with $S\subset\WDiv(G/H)$ and check easily that
the Cox ring associated with $S$ is isomorphic to $\oplus_{\chi\in\Chi(H)}\Of(G)^{(H)}_{\chi}\subset\Of(G)$.
\end{proof}

For a connected $H$, the group $Q$ is a torus and $\Chi(H)$ is free. Moreover, in this case $\Chi(H_1)=0$, thus
$\Cl(G/H_1)=0$ and the ring $R(G/H)$ is factorial (it follows also from Theorem~\ref{t1}).

For a disconnected $H$, the character group $\Chi(H_1)$ may be non-trivial.

\begin{example} \label{ex}
Let $G=\SL(2)$ and $H$ be the normalizer $N$ of a maximal torus $T\subset\SL(2)$.
Here $\Chi(H)$ is isomorphic to the cyclic group $\ZZ_2$ of order 2, $H_1=T$, and $\Chi(H_1)\cong\ZZ$. 
This shows that the Cox ring $R(\SL(2)/N)\cong\Of(\SL(2)/T)$ is not factorial. The space
$\SL(2)/N$ is a smooth affine surface $X$ with $\Cl(X)\cong\ZZ_2$, the ring $R(X)$ is isomorphic to
$$
\KK[x_1,x_2,x_3]/\langle x_2^2-x_1x_3-1\rangle,
$$
and the $\ZZ_2$-grading on $R(X)$ is given by $\deg(x_1)=\deg(x_2)=\deg(x_3)=\overline{1}$. 

\smallskip

One may propose several ways to associate a factorial ring with the surface $X$. Firstly, 
consider a subgroup $S\subset\WDiv(X)$ generated by a non-principal divisor. 
The ring $T_S(X)$ is isomorphic to 
$$
(\KK[x_1,x_2,x_3,t,t^{-1}]/\langle x_2^2-x_1x_3-1\rangle)^{\ZZ_2},
$$
where $\ZZ_2$ acts on the variables $x_1, x_2, x_3, t, t^{-1}$ via multiplication by $-1$.

Secondly, the space $\SL(2)/N$ admits a wonderful (the term is due to D.~Luna) $\SL(2)$-equivariant
embedding in $\PP^2$, and $R(\PP^2)\cong\KK[x_1,x_2,x_3]$. The Cox ring of a wonderful embedding of any spherical 
homogeneous space is described in~\cite{Br}.
\end{example}

\begin{example}
Using the construction of Example~\ref{ex}, one may find 
a smooth affine variety $X$ with $\Cl(X)\cong A$ and a non-factorial
Cox ring for any non-free finitely generated abelian group $A$.
Indeed, let $A\cong\ZZ_{d_1}\oplus\dots\oplus\ZZ_{d_s}\oplus\ZZ^n$.  
Put $X=G/H$, where $G$ is a direct product of $s+n$ simple groups,
$$
G=\SL(d_1)\times\dots\times\SL(d_s)\times\SL(2)\times\dots\times\SL(2),
$$
and $H=H(1)\times\dots\times H(s)\times T\times\dots\times T$, where $H(i)$ is an extension
of a maximal torus of the group $\SL(d_i)$ by elements of its normalizer that act as
degrees of one $d_i$-cycle. In this case,
$\Chi(H)\cong A$ and $\Chi(H_1)\cong\ZZ^{d_1+\dots+d_s-s}$. 
\end{example}


\end{document}